\documentclass{amsart}
\usepackage{amsrefs}
\usepackage{xcolor}
\usepackage{amssymb}
\usepackage{hyperref}
\usepackage{enumerate}

\DeclareMathOperator{\supp}{supp}
\DeclareMathOperator{\rank}{rank}
\newtheorem{theorem}{Theorem}[section]

\newtheorem{remark}{Remark}
\usepackage[utf8x]{inputenc}

\hypersetup{pdfstartview=FitBH}

\DeclareFontFamily{U}{mathx}{\hyphenchar\font45}
\DeclareFontShape{U}{mathx}{m}{n}{
      <5> <6> <7> <8> <9> <10>
      <10.95> <12> <14.4> <17.28> <20.74> <24.88>
      mathx10
      }{}
\DeclareSymbolFont{mathx}{U}{mathx}{m}{n}
\DeclareFontSubstitution{U}{mathx}{m}{n}
\DeclareMathAccent{\widecheck}{0}{mathx}{"71}
\DeclareMathAccent{\wideparen}{0}{mathx}{"75}

\begin{document} 

\title[Maximal inequalities and decay of Fourier transforms]{Maximal inequalities and the decay of Fourier transforms of measures}

\author{Terence L.~J.~Harris}
\address{School of Mathematics and Physics\\ The University of Queensland\\ St Lucia QLD 4067\\  Australia}
\email{terry.harris@uq.edu.au}

\begin{abstract} It is shown that Schrödinger maximal inequalities over fractals are equivalent to the $L^2$ decay rates of Fourier transforms of fractal measures over the paraboloid. A similar connection is shown between the wave equation and cone averages. 

One implication is well-known and follows from the Kolmogorov-Seliverstov-Plessner method, but the other implication is nontrivial and relies on a variant of the Marstrand projection theorem. 

The idea of the proof is to insert an extra averaging parameter into a proof of Lucà and Rogers, which used a quantitative ergodic lemma instead of the Marstrand projection theorem.  Lucà and Rogers gave a second proof of Bourgain's necessary condition $s\geq \frac{n}{2(n+1)}$ for Schrödinger solutions in $\mathbb{R}^{n+1}$ to converge pointwise a.e.~ back to the initial data as time tends to zero.  One application of the main theorem in this article is a proof of Bourgain's necessary condition which does not use ergodic theory or number theory. 
\end{abstract}
\maketitle

\section{Introduction}


For $\alpha \in (0,n+1]$, let $\beta(\alpha, \mathbb{P}^n)$ be the supremum over all $\beta \in \mathbb{R}$ with the property that for some constant $C_{\alpha,\beta}$, for all $R \geq 1$,
\[ \int_{\mathbb{P}^n} \left\lvert \widehat{\nu}( R \xi) \right\rvert^2 \, d\sigma_{\mathbb{P}_n}(\xi) \leq C_{\alpha, \beta} I_{\alpha}(\nu) R^{-\beta}, \]
for all $\mu \in \mathcal{M}\left(B_{n+1}(0,1)\right)$, where $\sigma_{\mathbb{P}_n}$ is the pushforward of Lebesgue measure on $B_n(0,1)$ under $\xi \mapsto (\xi, |\xi|^2)$, and 
\[ I_{\alpha}(\nu) = \int_{\mathbb{R}^{n+1}} \left\lvert \widehat{\nu}(\xi) \right\rvert^2 |\xi|^{\alpha-(n+1)} \, d\xi, \]
 is the Fourier energy of $\nu$ in $\mathbb{R}^{n+1}$. Here $\mathcal{M}(A)$ denotes the set of finite Borel measures compactly supported in $A$. 

Let $s_n(\alpha) = \frac{1}{2}(n-\beta(\alpha, \mathbb{P}^n))$ for $\alpha \in (0,n+1]$. By duality, $s_n(\alpha)$ is the infimum over all $s \in \mathbb{R}$ such that for some constant $C_{\alpha,s}$,
\begin{equation} \label{locals} \int \left\lvert e^{ i \frac{t}{R} \Delta}f(x) \right\rvert \, d\nu(x,t) \leq C_{\alpha, s}I_{\alpha}(\nu)^{1/2} R^s \|f\|_2, \end{equation}
for all $\nu \in \mathcal{M}\left(B_{n+1}(0,1)\right)$ and Schwartz $f$ with $\supp \widehat{f} \subseteq B_n(0,R)$, for all $R \geq 1$, where 
\[ e^{ i t \Delta}f(x) = (2\pi)^{-n/2} \int_{\mathbb{R}^n} e^{i \left(\langle x, \xi \rangle + t\lvert \xi\rvert^2\right)} \widehat{f}(\xi) \, d\xi, \]
is the solution to the linear Schrödinger equation
\begin{equation} \label{freeschrodinger} \begin{cases} &i \partial_t u - \Delta u = 0 \\
&u(x,0) = f(x), \end{cases} \end{equation}
where $(x,t) \in \mathbb{R}^n \times \mathbb{R}$. 


 For $\alpha \in (0,n+1]$, let $s_n^{\max}(\alpha)$ be the infimum over all $s \in \mathbb{R}$ such that for some constant $C_{\alpha,s}$,
\begin{equation} \label{localsmax} \int \sup_{0 < t < 1} \left\lvert e^{ i t \Delta}f(x) \right\rvert \, d\mu(x,t) \leq C_{\alpha,s}I_{\alpha}(\mu)^{1/2} R^s \|f\|_2, \end{equation}
for all $\mu \in \mathcal{M}\left(B_{n}(0,1)\right)$ and Schwartz $f$ with $\supp \widehat{f} \subseteq B_n(0,R)$, where $I_{\alpha}(\mu)$ is the Fourier energy of $\mu$ in $\mathbb{R}^{n}$. 
It is well-known and straightforward to check that $s_n(\alpha)$, $s_{n}^{\max}(\alpha)$ are continuous in $\alpha$ (\cite[Lemma~3.1]{wolffcircle}). Moreover, when $\supp \widehat{f} \subseteq B_n(0,R) \setminus B_n(0,R/2)$, \eqref{localsmax} is known to be equivalent to the same inequality with the $\sup_{0 < t < 1}$ replaced by the smaller $\sup_{0 < t < R^{-1}}$ \cite{leeimrn,lucarogers}, and consequently for $0 < \alpha \leq n$,  the standard Kolmogorov-Seliverstov-Plessner method gives that $s_n^{\max}(\alpha) \leq s_n(\alpha)$.

For both $\beta(\alpha, \mathbb{P}^n)$ and $s_n(\alpha)$, an equivalent definition results if $I_{\alpha}(\nu)$ is replaced by $c_{\alpha}(\nu) \nu(\mathbb{R}^{n+1})$ or the non-Fourier energy when $0 < \alpha < n+1$, and similarly for $s_n^{\max}(\alpha)$ when $0 < \alpha < n$ \cite[Lemma~1.5]{wolffcircle}, where 
\[ c_{\alpha}(\nu) =\sup_{x \in \mathbb{R}^{n+1}, r>0} r^{-\alpha} \nu(B(x,r)). \]
 Below, $\|f\|_{H^s} = \|f\|_{H^s(\mathbb{R}^n)}$ is defined by 
\[ \|f\|_{H^s} = \left( \int_{\mathbb{R}^n} \left( 1+ \lvert \xi \rvert^2 \right)^{s} \left\lvert \widehat{f}(\xi) \right\rvert^2 \, d\xi\right)^{1/2}. \] Let $\mathcal{S}(\mathbb{R}^n)$ be the set of Schwartz functions on $\mathbb{R}^n$.  The main result of this article is the following. 

\begin{theorem} \label{equivalence} Let $n \geq 1$.  Let $\alpha>0$ and let $s \geq  0$. There is a constant $K$  depending only on $s$ and $n$ such that following holds. 
\begin{enumerate}[(i)] 
\item \label{sufficientcondition}   If 
\begin{multline} \label{maxassumed} \int \sup_{0 < t < 1} \left\lvert e^{i t \Delta}g \right\rvert \, d\mu(x) \leq C_{\alpha,s}I_{\alpha}(\mu)^{1/2} \left\lVert g \right\rVert_{H^s(\mathbb{R}^n)} \\
 \forall \mu \in \mathcal{M}\left(B_n(0,1)\right) \quad \forall g \in \mathcal{S}(\mathbb{R}^n), \end{multline}
then
\begin{multline} \label{goal} \int \left\lvert e^{i t \Delta}f \right\rvert \, d\nu(x,t) \leq K C_{\alpha,s}I_{\alpha}(\nu)^{1/2} \left\lVert f \right\rVert_{H^s(\mathbb{R}^n)}\\
 \forall \nu \in \mathcal{M}\left(B_{n+1}(0,1)\right) \quad \forall f \in \mathcal{S}(\mathbb{R})^n. \end{multline}

\item \label{localversion} (Local version)  If $\lambda\geq 1$ and
\begin{multline} \label{maxassumed2} \int \sup_{0 < t < \lambda^{-1}} \left\lvert e^{i t \Delta}g \right\rvert \, d\mu(x) \leq C_{\alpha,s}I_{\alpha}(\mu)^{1/2} \lambda^s \left\lVert g \right\rVert_{2}\\
\forall \mu \in \mathcal{M}\left(B_n(0,1)\right) \quad \forall g \in \mathcal{S}(\mathbb{R}^n) \text{ with } \supp \widehat{g} \subseteq B_n(0, \lambda), \end{multline}
 then 
\begin{multline} \label{goal2} \int \left\lvert e^{i \frac{t}{\lambda} \Delta}f \right\rvert \, d\nu(x,t) \leq K C_{\alpha,s} I_{\alpha}(\nu)^{1/2} \lambda^s \left\lVert f \right\rVert_2\\
\\
 \forall \nu \in \mathcal{M}\left(B_{n+1}(0,1)\right) \quad \forall f \in \mathcal{S}(\mathbb{R})^n \text{ with }  \supp \widehat{f} \subseteq B_n(0, \lambda).\end{multline} 
\item \label{weakversion} The above equivalences all hold if the $L^1(\mu)$ and $L^1(\nu)$ norms on the left-hand side are replaced by $L^{1, \infty}(\mu)$ and $L^{1, \infty}(\nu)$ norms.
\item \label{nontrivial1} For all $\alpha \in (0,n]$, $s_n(\alpha) = s_{n}^{\max}(\alpha)$. If $n <\alpha \leq n+1$ then $s_n^{\max}(\alpha) \geq s_n(\alpha)$.  
\end{enumerate}
 \end{theorem}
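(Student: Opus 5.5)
The plan is to deduce \eqref{goal} from \eqref{maxassumed} using the Galilean symmetry of the Schrödinger group, and then to average over the Galilean parameter. The averaging is what turns the space-time energy into a family of spatial energies; this is the Marstrand-type projection step.

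\textbf{Galilean reduction.} For $v\in\mathbb{R}^n$ put $f_v(x)=e^{i\langle x,v\rangle}f(x)$. Completing the square in the phase gives the pointwise identity
\[ \left\lvert e^{it\Delta}f_v(y)\right\rvert=\left\lvert e^{it\Delta}f(y+2tv)\right\rvert , \]
up to the sign convention. Define the projection $\pi_v(x,t)=x-2tv$. For $(x,t)\in\supp\nu$ with $t\in(0,1)$ this identity yields
\[ \left\lvert e^{it\Delta}f(x)\right\rvert\le\sup_{0<t'<1}\left\lvert e^{it'\Delta}f_v(\pi_v(x,t))\right\rvert . \]
Integrating against $\nu$ therefore gives
\[ \int\left\lvert e^{it\Delta}f\right\rvert\,d\nu\le\int\sup_{0<t'<1}\left\lvert e^{it'\Delta}f_v\right\rvert\,d(\pi_{v\#}\nu). \]
Some bookkeeping is needed here:
\begin{itemize}
\item Negative times are handled by splitting $\nu$ according to the sign of $t$ and using $e^{-it\Delta}f=\overline{e^{it\Delta}\bar f}$.
\item For $|v|\le1$, $\pi_{v\#}\nu$ is supported in $B_n(0,3)$ rather than $B_n(0,1)$. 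We cover $B_n(0,3)$ by $O_n(1)$ unit balls, restrict the measure to each piece (which does not increase energy by more than a constant after a smooth partition), and use translation invariance of \eqref{maxassumed}, since translating $g$ translates $e^{it\Delta}g$.
\item For $|v|\le 1$ we have $\|f_v\|_{H^s}\le C_{s}\|f\|_{H^s}$, because $(1+|\xi+v|^2)^{s}\lesssim_s(1+|\xi|^2)^s$.
\end{itemize}
Applying \eqref{maxassumed} then gives, for each $|v|\le1$,
\[ \int\left\lvert e^{it\Delta}f\right\rvert\,d\nu\lesssim_{s,n}C_{\alpha,s}\,I_\alpha(\pi_{v\#}\nu)^{1/2}\|f\|_{H^s}. \]

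\textbf{Energy averaging (the main step).} Average the last bound over $v\in B_n(0,1)$ and apply Cauchy--Schwarz. It then suffices to prove
\[ \int_{B_n(0,1)}I_\alpha(\pi_{v\#}\nu)\,dv\lesssim_n I_\alpha(\nu). \]
By the Fourier slice identity, $\widehat{\pi_{v\#}\nu}(\eta)=\widehat\nu(\eta,-2\langle v,\eta\rangle)$. The left side is therefore
\[ \int_{\mathbb{R}^n}|\eta|^{\alpha-n}\int_{B_n(0,1)}\left\lvert\widehat\nu(\eta,-2\langle v,\eta\rangle)\right\rvert^2\,dv\,d\eta. \]
For fixed $\eta$, substitute $\tau=-2\langle v,\eta\rangle$ in the inner integral. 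The pushforward of Lebesgue measure on $B_n(0,1)$ under $v\mapsto\tau$ has density $\lesssim |\eta|^{-1}\mathbf 1_{|\tau|\le2|\eta|}$. On the region $|\tau|\le 2|\eta|$ we have $|\eta|\approx|(\eta,\tau)|$, so
\[ |\eta|^{\alpha-n-1}\approx|(\eta,\tau)|^{\alpha-(n+1)}. \]
The left side is thus bounded by $\int_{\mathbb{R}^{n+1}}|\widehat\nu|^2|\zeta|^{\alpha-(n+1)}\,d\zeta=I_\alpha(\nu)$.

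This computation is where I expect the real work to lie. One has to check the Jacobian carefully, check that the partition and translation steps keep the energy comparable, and note that the argument uses no restriction $\alpha\le n$. This last point is what gives the inequality $s_n^{\max}\ge s_n$ for all $\alpha$ in (iv). Since the constant $K$ may depend only on $s$ and $n$, we also need the implicit constants in the Jacobian bound to be independent of $\alpha$, or the reduction must be arranged so that they are.

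\textbf{Local and weak versions, and (iv).}
\begin{itemize}
\item \emph{Local version (ii).} Replace $t$ by $t/\lambda$ with $t\in(0,1)$ and use $v$ ranging over $B_n(0,\lambda)$ in place of $B_n(0,1)$, with projection $x-2(t/\lambda)v$. After rescaling $v=\lambda w$ this is exactly the same projection family, so the energy average is identical. The modulated function $f_v$ has $\supp\widehat{f_v}\subseteq B_n(0,2\lambda)$; this costs a factor $2^s$, or an application of the hypothesis at scale $2\lambda$ together with the trivial time-range comparison $\lambda^{-1}\le(2\lambda)^{-1}\cdot 2$, absorbed by splitting the time interval. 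The $L^2$ norm is unchanged.
\item \emph{Weak version (iii).} The whole argument is a pointwise domination $|e^{it\Delta}f(x)|\le Mf_v(\pi_v(x,t))$, where $Mg(y)=\sup_{0<t'<1}|e^{it'\Delta}g(y)|$ is the maximal function. A weak-type bound for $\nu$ therefore follows from $\nu\{(x,t):|e^{it\Delta}f(x)|>\gamma\}\le\pi_{v\#}\nu\{Mf_v>\gamma\}$ and the same averaging in $v$.
\item \emph{Statement (iv).} Part (ii) gives $s_n(\alpha)\le s_n^{\max}(\alpha)$ for every $\alpha\in(0,n+1]$. For $\alpha\le n$ we combine this with the known Kolmogorov--Seliverstov--Plessner bound $s_n^{\max}(\alpha)\le s_n(\alpha)$ recalled above, which relies on the reduction to frequency annuli and to times $t<R^{-1}$.
\end{itemize}
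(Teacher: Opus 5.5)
Your proof is correct. Its first half (the modulation $f_v$, the pointwise bound by the maximal function at $\pi_v(x,t)=x-2tv$, the rescaling $v=\lambda w$ in the local version, and averaging in $v$ followed by Cauchy--Schwarz) is exactly the paper's argument. The real difference is how you prove the averaged energy bound $\int_{B_n(0,1)} I_\alpha(\pi_{w\#}\nu)\,dw\lesssim_n I_\alpha(\nu)$.

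\textbf{The paper's route.} It QR-factors $\Pi_\theta=L_\theta Q_\theta$ and discards $L_\theta$ by a change of variables in the energy. It then identifies $I_\alpha(Q_{\theta\#}\nu)$ with the integral of $|\xi|^{\alpha-n}|\widehat\nu(\xi)|^2$ over the hyperplane $G(\theta)^\perp$. Finally it transfers the $\theta$-average to $S^n$ through the diffeomorphism $G$ and concludes with Solmon's identity for $k=1$.

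\textbf{Your route.} You use the slice formula $\widehat{\pi_{w\#}\nu}(\eta)=\widehat\nu(\eta,-2\langle w,\eta\rangle)$. For fixed $\eta$ you push Lebesgue measure on $B_n(0,1)$ forward to the line $\tau=-2\langle w,\eta\rangle$. This gives a density at most $C_n|\eta|^{-1}$, supported in $|\tau|\le 2|\eta|$, where $|\eta|\le|(\eta,\tau)|\le\sqrt5|\eta|$. This checks out.

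\textbf{Comparison.} Your computation is shorter and entirely elementary: it needs no continuity of the QR factors, no Jacobian of $G$, and no Solmon identity. It also makes the uniformity in $\alpha$ you worried about transparent, since $|\eta|^{\alpha-n-1}\le 5^{(n+1)/2}|(\eta,\tau)|^{\alpha-n-1}$ on that cone for every $\alpha>0$. What the paper's route buys is modularity and an explicit link to the Kaufman--Mattila Fourier-energy projection theorem. Any smooth family of surjections $\mathbb{R}^{n+1}\to\mathbb{R}^n$ whose kernels sweep out a patch of $S^n$ nondegenerately is handled by the same steps. That is why the wave-equation family $\Pi_{\lambda,\theta}$ in Section~3 needs only a check that the kernel map is a local diffeomorphism. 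Your slice computation would instead have to be redone for that family, as a coarea computation for $(\eta,\lambda,\theta)\mapsto\Pi_{\lambda,\theta}^T\eta$.

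\textbf{One small point, which the paper also glosses over.} In (ii) the hypothesis is assumed only at the single scale $\lambda$. So appealing to it at scale $2\lambda$, or paying ``a factor $2^s$'', is not literally available. This is harmless for (iv), where the hypothesis holds at every scale.

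To stay at scale $\lambda$, split $\widehat f$ by a smooth partition $0\le\phi_j\le 1$ into $O_n(1)$ pieces supported in balls $B(c_j,\lambda/4)$ with $|c_j|\le\lambda$. For the $j$th piece, average only over $v\in B(-c_j,\lambda/4)$. Then:
\[ \supp\widehat{f_{j,v}}\subseteq B_n(0,\lambda/2), \qquad \|f_{j,v}\|_2\le\|f\|_2. \]
Your slice computation is unchanged for the off-centre ball $w\in B(-c_j/\lambda,1/4)$, now with $|\tau|\le\frac{5}{2}|\eta|$. The projected measures then live in $B_n(0,4)$, which your covering-and-translation step already handles.
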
 

\begin{remark} Clearly \eqref{goal} implies \eqref{maxassumed} and \eqref{goal2} implies \eqref{maxassumed2} (with different constants) when $ 0 < \alpha < n$. Moreover, if \eqref{maxassumed2} holds then the same holds with $\sup_{0 < t< \lambda^{-1}}$ replaced by $\sup_{0 < t < 1}$ for functions with $\supp \widehat{g} \subseteq B_n(0, \lambda) \setminus B_n(0, \lambda/2)$ \cite[Lemma~2.1]{leerogers}. Hence \eqref{maxassumed2} (for all $\lambda$) implies \eqref{maxassumed} with an $\epsilon$ loss in $s$. Therefore, \eqref{maxassumed}, \eqref{goal}, \eqref{maxassumed2}, and \eqref{goal2} are all equivalent up to $\epsilon$ losses in $s$ when $0 < \alpha < n$. \end{remark}

In \cite{eceiz},  Eceizabarrena and Ponce-Vanegas proved a lower bound on $s_n^{\max}(\alpha)$  which for $\alpha \in [n/2,n]$ matches the lower bound on the a priori larger $s_n(\alpha)$ from \cite[Theorem~1.2]{du} (equivalently upper bound on $\beta(\alpha, \mathbb{P}^n)$). Theorem~\ref{equivalence} shows that lower bounds on $s_n^{\max}(\alpha)$ follow automatically from lower bounds on $s_n(\alpha)$.  The problem of determining $s_n^{\max}(\alpha)$ and $s_n(\alpha)$ were referred to in the introduction of \cite{eceiz} as being ``morally equivalent'', but as discussed in \cite{eceiz}, specific counterexamples for $s_n(\alpha)$ may not be counterexamples for $s_n^{\max}(\alpha)$. In \cite{eceiz} they also gave lower bounds for the Hausdorff dimension of the set of $x$ for which $u(x,t) \not \to u_0(x)$, where $u$ solves \eqref{freeschrodinger} with initial data $u_0 \in H^s$. Their bounds do not follow automatically from lower bounds on $s_n^{\max}$ since there is no  maximal principle for fractal measures. 


Theorem~\ref{equivalence} also implies that upper bounds for $s_n^{\max}(\alpha)$ when $0 < \alpha \leq n$ could imply lower bounds for $\beta(\alpha, \mathbb{P}^n)$, but currently the best known upper bounds for the divergence set proceed via lower bounds $\beta(\cdot , \mathbb{P}^n)$. 

The naïve way to try to obtain \eqref{goal} from \eqref{maxassumed} would be to project the measure $\nu$ under $\pi(x,t)=x$ and apply \eqref{maxassumed} to the projection. The obstruction to this approach is that the Fourier energy of the projected measure $\pi_{\sharp} \nu$ of $\nu$ may be much larger than that of $\nu$. A way around this is to change variables in the Schrödinger operator, translating the Fourier transform of the initial data by some vector $-\theta$. This translation idea was used by Lucà and Rogers \cite{lucarogers2}. Due to the quadratic phase, these translations can be passed over to the measure $\nu$, replacing $\nu$ by the modified $T_{\theta \sharp} \nu$ for a linear map $T_{\theta} : \mathbb{R}^{n+1} \to \mathbb{R}^{n+1}$ given by $T_{\theta}(x,t) = (x-2t\theta,t)$. Up to some harmless scalings of the coordinates, $\pi \circ T_{\theta}$ can be thought of as a projection onto the $n$-plane in $\mathbb{R}^{n+1}$ orthogonal to $(2\theta, 1)$, where $\pi(x,t) =x$. Scaling $(2\theta, 1)$ to get a unit vector, this smoothly parametrises the upper hemisphere of sphere $S^n$ as $\theta$ varies.  Kaufman's version of the Marstrand projection theorem in the plane \cite{kaufman}, and Mattila's generalisation to $n \geq 2$ \cite{mattila1975}, imply that for a typical unit vector $v \in S^n$, the ($\alpha$-dimensional) energy of the projected measure $P_{v^{\perp} }\nu$ is bounded by the energy of $\nu$. This works for most values of $\alpha$, but for the case $\alpha=n$ corresponding to a.e.~convergence, it is necessary to use the Fourier energy instead of the standard energy. This Fourier version of the Marstrand projection theorem reduces to the case $k=1$ of the following identity due to Solmon~\cite[Lemma~2.2]{solmon}, for $1 \leq k < d$:
\begin{equation} \label{solmonidentity} \int_{G(d,k)} \int_{V^{\perp}} |x|^k f(x) \, d\mathcal{H}^{d-k}(x)\,  d\gamma_{d,k}(V) = \frac{ \mathcal{H}^{d-k-1}(S^{d-k-1}) }{\mathcal{H}^d(S^{d-1})} \int_{\mathbb{R}^d} f(x) \, dx, \end{equation}
where $\gamma_{d,k}$ is the Haar probability measure on $G(d,k)$. Using polar coordinates, Kaufman \cite{kaufman} proved the Fourier energy version of the Marstrand projection theorem in the plane, and Kaufman's method generalises by using \eqref{solmonidentity}. This higher dimensional generalisation is well-known (see e.g.~the remark after Theorem~5.9 in \cite{mattilafourier}), but I do not know of the earliest reference.

In the proof of Lucà and Rogers, the averaging parameter $\theta$ varies over a set of one dimension less than in this article, and the quantitative ergodic theorem they use seems to be a kind of ``restricted projection theorem''. Adding an additional averaging parameter allows a simpler projection theorem to be used which applies in a more general setting.

The algebraic simplicity of the function $\left\lvert \xi \right\rvert^2$ in the phase of the Schrödinger operator seems to be crucial to the argument. In Section~\ref{wavesection} the proof is extended to the wave equation. The square root in the phase of the wave operator would seem to be an issue, but since paraboloids are conic sections, after a rotation the phase becomes quadratic when one variable is fixed, and the same approach as in Schrödinger case can be applied. 



\section{Proof for the Schrödinger equation}

It follows directly from the definitions that $\eqref{localversion} \Rightarrow \eqref{nontrivial1}$ in Theorem~\ref{equivalence}. 
so it suffices to prove \eqref{sufficientcondition} and \eqref{localversion} (and the weak $L^1$ case mentioned in \eqref{weakversion}). Only the proof of \eqref{localversion} will be given since the proof of \eqref{sufficientcondition} is virtually identical to the case $\lambda = 1$ of \eqref{localversion}, and the proof of the weak $L^1$ version requires only minor adjustments to the beginning of the proof of the strong $L^1$ version.  

\begin{proof}[Proof of \eqref{localversion} of Theorem~\ref{equivalence}] Let $\alpha>0$, $s \geq 0$, and assume \eqref{maxassumed2} holds. It will be shown that \eqref{goal2} holds. 

Let $\nu$ be a Borel probability measure on $B_{n+1}(0,1)$, let $\lambda \geq 1$ and let $f$ be a Schwartz function with $\supp \widehat{f} \subseteq B_n(0, \lambda)$. Without loss of generality it may be assumed that $\nu$ is supported in $0 \leq t \leq 1$. For each $\theta \in B_n(0, \lambda)$, let 
\[ \widehat{f_{\theta}}(\eta) =  \widehat{f}(\eta - \theta). \]
By a change of variables 
\[ e^{it \Delta} f(x) = (2\pi)^{-n/2} \int e^{i \left\langle \left(\eta - \theta, \left\lvert \eta-\theta \right\rvert^2 \right), (x, t) \right\rangle } \widehat{f_{\theta}}(\eta) \, d\eta. \]
This becomes
\begin{multline*} e^{it \Delta} f(x) = (2\pi)^{-n/2} e^{i \left(- \langle \theta, x \rangle + \lvert \theta \rvert^2 t\right)} \int e^{i \left\langle \left( \eta, \left\lvert \eta \right\rvert^2 \right), (x-2t\theta , t) \right\rangle } \widehat{f_{\theta}}(\eta) \, d\eta \\
 =   e^{i \left( -\langle \theta, x \rangle + \lvert \theta \rvert^2 t\right)}e^{it \Delta}f_{\theta}( x-2t\theta ). \end{multline*} 
Hence, letting $D_{\lambda^{-1}}(x,t) = (x, t/\lambda)$,  for any $\theta \in B_n(0,\lambda)$,
\begin{multline*} \int \left\lvert e^{i\frac{t}{\lambda} \Delta}f(x) \right\rvert \, d\nu(x,t)  = \int \left\lvert e^{it \Delta}f(x) \right\rvert \, dD_{\lambda^{-1}\sharp}\nu(x,t) \\
= \int \left\lvert e^{it \Delta}f_{\theta}\left(x-2t\theta\right) \right\rvert \, dD_{\lambda^{-1} \sharp}\nu(x,t). \end{multline*}
If $T_{\theta} : \mathbb{R}^{n+1} \to \mathbb{R}^{n+1}$ is the linear map $T_{\theta}(x,t) = (x-2t\theta , t)$, the above can be written as 
\[ \int \left\lvert e^{i\frac{t}{\lambda} \Delta}f(x) \right\rvert \, d\nu(x,t)  = \int \left\lvert e^{it \Delta}f_{\theta}(x) \right\rvert \, dT_{\theta \sharp} D_{\lambda^{-1} \sharp} \nu(x,t). \]
Hence, for any $\theta \in B_n(0, \lambda)$, 
\begin{multline*} \int \left\lvert e^{i\frac{t}{\lambda} \Delta}f(x) \right\rvert \, d\nu(x,t) \leq \int \sup_{0 < t <\lambda^{-1}} \left\lvert e^{it\Delta}f_{\theta}(x) \right\rvert \, dT_{\theta \sharp} D_{\lambda^{-1} \sharp}\nu(x,t)  \\
= \int \sup_{0 < t <\lambda^{-1}} \left\lvert e^{it \Delta}f_{\theta}(x) \right\rvert \, d\pi_{\sharp} T_{\theta \sharp} D_{\lambda^{-1} \sharp} \nu(x) = \int \sup_{0 < t <\lambda^{-1}} \left\lvert e^{it \Delta}f_{\theta}(x) \right\rvert \, d\Pi_{\theta/\lambda \sharp} \nu(x), \end{multline*}
where $\pi: \mathbb{R}^{n+1} \to \mathbb{R}^n$ is $(x,t) \mapsto x$, and $\Pi_{\theta}(x,t) = x-2t\theta$. By the assumed inequality \eqref{maxassumed2}, using that $\supp \Pi_{\theta/\lambda \sharp} \nu \subseteq B_n(0,100)$, this gives 
\begin{equation} \label{detour} \int \left\lvert e^{i\frac{t}{\lambda} \Delta}f(x) \right\rvert \, d\nu(x,t) \leq C_{\alpha,s}I_{\alpha}\left(\Pi_{\theta/\lambda \sharp} \nu \right)^{1/2} \lambda^s \|f\|_{2}, \end{equation}
where the above used that $\supp \widehat{f_{\theta}} \subseteq B_n(0,2\lambda)$. A formula for the linear map $\Pi_{\theta} : \mathbb{R}^{n+1} \to \mathbb{R}^n$ is 
\[ \Pi_{\theta}(x,t) = x - 2t \theta = \begin{pmatrix} I_n  & -2\theta \end{pmatrix} \begin{pmatrix} x \\ t \end{pmatrix}, \]
where $I_n$ is the $n \times n$ identity matrix,  so $\Pi_{\theta}$ can be identified with the $n \times (n+1)$ matrix $\begin{pmatrix} I_n  & -2\theta \end{pmatrix}$ which has rank $n$. The range of $\Pi_{\theta}^T$ is the orthogonal complement in $\mathbb{R}^{n+1}$ of the kernel of $\Pi_{\theta}$, and the kernel of $\Pi_{\theta}$ is spanned by the unit vector 
\[ G(\theta) = \frac{1}{\sqrt{ 4 \lvert \theta \rvert^2 + 1 } }\begin{pmatrix} 2 \theta \\ 1 \end{pmatrix}, \]
where $G: \mathbb{R}^n \to S^n_+$ is a $C^{\infty}$ diffeomorphism from $\mathbb{R}^n$ to the upper hemisphere $S^n_+$ of $S^n$ with inverse $F: S^n_+ \to \mathbb{R}^n$ given by $F(x,x_{n+1}) = \frac{1}{2x_{n+1}} x$. 

By taking the QR decomposition of $\Pi_{\theta}^T$, the matrix $\Pi_{\theta}$ can be written as $\Pi_{\theta} = L_{\theta} Q_{\theta}$, where $L_{\theta}$ is an $n \times n$ lower triangular matrix with positive entries on the diagonal, and $Q_{\theta}$ is an $n \times (n+1)$ matrix whose rows form an orthonormal basis for the complement of $G(\theta)$ (the range of $\Pi_{\theta}^T$). By uniqueness of this decomposition (using \cite[Theorem~5.2.2]{golubvanloan} and $\rank \Pi_{\theta}^T = n$), $L_{\theta}$ and $Q_{\theta}$ depend continuously on $\theta$. Substituting $\Pi_{\theta} = L_{\theta} Q_{\theta}$ into \eqref{detour} gives 
\[   \int \left\lvert e^{i\frac{t}{\lambda} \Delta}f(x) \right\rvert \, d\nu(x,t) \leq C_{\alpha,s} I_{\alpha}\left(L_{\theta/\lambda \sharp} Q_{\theta/\lambda \sharp} \nu \right)^{1/2} \lambda^s \|f\|_2. \]
Integrating both sides over $B_n(0,\lambda)$, changing variables and applying Cauchy-Schwarz gives
\[ \int \left\lvert e^{i\frac{t}{\lambda} \Delta}f(x) \right\rvert \, d\nu(x,t) \lesssim  C_{\alpha,s}\lambda^s \|f\|_2\left(\int_{B_n(0,1)} I_{\alpha}\left(L_{\theta \sharp} Q_{\theta \sharp} \nu \right) \, d\mathcal{H}^n(\theta)\right)^{1/2}, \]
where $\mathcal{H}^n$ is the Lebesgue measure on $\mathbb{R}^n$ (which equals $n$-dimensional Hausdorff measure). Since $L_{\theta}$ is an invertible $n \times n$ matrix which is continuous in $\theta$, by changing variables in the definition of Fourier energy this can be simplified to 
\begin{equation} \label{aftersimplify} \int \left\lvert e^{i\frac{t}{\lambda}\Delta}f(x) \right\rvert \, d\nu(x,t) \lesssim  C_{\alpha,s}\lambda^s\|f\|_{2}\left(\int_{B_n(0,1)}I_{\alpha}\left(Q_{\theta \sharp} \nu \right) \, d\mathcal{H}^n(\theta)\right)^{1/2}. \end{equation}
But since the rows of $Q_{\theta}$ form an orthonormal basis for the complement of $G(\theta)$, and by rotation invariance of the measure $\mathcal{H}^n$ on $\mathbb{R}^{n+1}$,
\[ I_{\alpha}\left(Q_{\theta \sharp} \nu \right) = \int_{G(\theta)^{\perp}} \left\lvert \xi \right\rvert^{\alpha-n} \left\lvert \widehat{\nu}(\xi) \right\rvert^2 \, d\mathcal{H}^n(\xi). \]
  Therefore, \eqref{aftersimplify} becomes 
\begin{multline*} \int \left\lvert e^{i\frac{t}{\lambda} \Delta}f(x) \right\rvert \, d\nu(x,t) \\
\lesssim  C_{\alpha,s} \lambda^s\|f\|_{2}\left(\int_{B_n(0,1)} \int_{G(\theta)^{\perp}} \left\lvert \xi \right\rvert^{\alpha-n} \left\lvert \widehat{\nu}(\xi) \right\rvert^2 \, d\mathcal{H}^n(\xi) \, d\mathcal{H}^n(\theta)\right)^{1/2}. \end{multline*}
This can be written as 
\begin{multline*} \int \left\lvert e^{i\frac{t}{\lambda} \Delta}f(x) \right\rvert \, d\nu(x,t) \\
\lesssim C_{\alpha,s}  \lambda^s\|f\|_{2} \left(\int_{G(B_n(0,1))} \int_{v^{\perp}} \left\lvert \xi \right\rvert^{\alpha-n} \left\lvert \widehat{\nu}(\xi) \right\rvert^2 \, d\mathcal{H}^n(\xi) \, d(G_{\sharp}\mathcal{H}^n)(v)\right)^{1/2}. \end{multline*}
Since $G$ is Lipschitz, $G_{\sharp}\mathcal{H}^n \lesssim \mathcal{H}^n \restriction_{S^n}$ on $G(B_n(0,1))$, so the above becomes 
\begin{equation} \label{aftercs}  \int \left\lvert e^{i\frac{t}{\lambda} \Delta}f(x) \right\rvert \, d\nu(x,t) \lesssim C_{\alpha,s} \lambda^s \|f\|_{2}\left(\int_{S^n} \int_{v^{\perp}} \left\lvert \xi \right\rvert^{\alpha-n} \left\lvert \widehat{\nu}(\xi) \right\rvert^2 \, d\mathcal{H}^n(\xi)\, d\mathcal{H}^n(v)\right)^{1/2}. \end{equation}
By the X-ray transform identity of Solmon \eqref{solmonidentity} with $k=1$,
\begin{multline*} \int_{S^n} \int_{v^{\perp}} \left\lvert \xi \right\rvert^{\alpha-n} \left\lvert \widehat{\nu}(\xi) \right\rvert^2 \, d\mathcal{H}^n(\xi) \, d\mathcal{H}^n(v) \\=\int_{S^n} \int_{v^{\perp}} |\xi| \left(\left\lvert \xi \right\rvert^{\alpha-(n+1)} \left\lvert \widehat{\nu}(\xi) \right\rvert^2\right) \, d\mathcal{H}^n(\xi) \, d\mathcal{H}^n(v) \\
= c_n \int_{\mathbb{R}^{n+1}} \left\lvert \xi \right\rvert^{\alpha-(n+1)} \left\lvert \widehat{\nu}(\xi) \right\rvert^2 \, d\xi = c_nI_{\alpha}(\nu). \end{multline*}
Combining with \eqref{aftercs} yields 
\[ \int \left\lvert e^{i\frac{t}{\lambda} \Delta}f(x) \right\rvert \, d\nu(x,t) \lesssim C_{\alpha,s} \lambda^s \|f\|_{2}I_{\alpha}(\nu)^{1/2}.  \] 
This verifies \eqref{goal2} and finishes the proof. 
\end{proof} 

\begin{remark} The proof shows that in \eqref{goal}, $I_{\alpha}(\nu)$ could be replaced by 
\[ \inf_{v \in S^{n} : |v-e_{n+1}| < 1/2} I_{\alpha}(P_{v^{\perp} \sharp} \nu), \]
 where $I_{\alpha}(P_{v^{\perp} \sharp} \nu)$ refers to $n$-dimensional Fourier energy, which may be finite when $I_{\alpha}(\nu)$ is not. For example, to disprove the endpoint case $\alpha = n$ and $s = \frac{n}{2(n+1)}$, it would suffice to find a $\nu$ which has one of its projections in $L^2$ such that \eqref{goal} fails.   \end{remark}

\begin{remark} The proof generalises to $L^q$ norms on the left-hand side, with $q>1$, though the optimal $s$ is matches the $q=1$ case for all $1 \leq q \leq 2$ (see \cite[Appendix~C]{BBCR}). The dependence on the measure in the right-hand side has to be updated to $I_{\alpha}(\mu)^{1/2}\|\mu\|^{\frac{1}{q} - 1}$ (which can be thought of as $c_{\alpha}(\mu)^{1/2} \|\mu\|^{ \frac{1}{q} - \frac{1}{2}}$) when $1 \leq q \leq 2$, or $\left(\frac{I_{\alpha}(\mu)}{\|\mu\|}\right)^{1/q}$ (which can be thought of as $c_{\alpha}(\mu)^{1/q}$) when $q \geq 2$.   \end{remark}

\begin{sloppypar}
Here it is shown how the example from \cite{BBCRV, du, lucarogers2} fits with the above when showing that $s \geq n/(2(n+1))$ is necessary.  
\begin{proof}[Re-proof of Bourgain's necessary condition $s \geq \frac{n}{2(n+1)}$] By Stein's maximal principle (see \cite[Appendix~A]{pierce}), it suffices to show that if the maximal operator $f \mapsto  \sup_{0 < t < 1} \left\lvert e^{it \Delta} f(x) \right\rvert$ is bounded from $H^{s}$ to $L^{2, \infty}(B_n(0,1))$, then $s \geq \frac{n}{2(n+1)}$. Assuming this boundedness, 
\[ \sup\left\{   \left\lVert \sup_{0 < t < 1} e^{it \Delta} f \right\rVert_{L^{2, \infty}(\mathbb{R}^n) } : f \in \mathcal{S}(\mathbb{R}^n), \quad \|f\|_{H^s} \leq 1\right\} < \infty. \]
By Cauchy-Schwarz, this implies that
\begin{multline*} \sup\bigg\{ \left\lVert \sup_{0 < t < 1} e^{it \Delta} f \right\rVert_{L^{1, \infty}(\mu)} :\\
 \supp \mu \subseteq B_{n}(0,1), \quad \|\widehat{\mu}\|_2 \leq 1, \quad f \in \mathcal{S}(\mathbb{R}^n), \quad \|f\|_{H^s} \leq 1 \bigg\} < \infty, \end{multline*}
where it was used that the $n$-dimensional Fourier energy of a measure $\mu$ in $\mathbb{R}^n$ equals the $L^2$ norm of $\widehat{\mu}$, which is finite if and only if $\mu \in L^2$ (see \cite[Theorem~3.3]{mattilafourier}). Therefore, the weak version of \eqref{maxassumed2} holds for all $\lambda \geq 1$, and so by Theorem~\ref{equivalence} the weak version of \eqref{goal2} holds for all $\lambda \geq 1$. Therefore, there is a constant $C$ such that for all $\lambda \geq 1$ and $M >0$, 
\begin{equation} \label{weakestimate} M\nu\left\{ (x,t) : \left\lvert e^{i\frac{t}{\lambda} \Delta} f (x)\right\rvert > M \right\} \leq C I_{n}(\nu)^{1/2} \lambda^s \|f\|_2 , \end{equation}
for all $\supp \widehat{f} \subseteq B_n(0, \lambda)$, for all $\nu \in \mathcal{M}\left( B_{n+1}(0,1) \right)$. 

Let $\sigma = \frac{1}{2(n+1)}$.  Let $R$ be large, and let $\varepsilon>0$ be a sufficiently small absolute constant. Let 
\[ \Omega = \mathcal{N}_{\varepsilon} \left(2\pi R^{1-\sigma} \mathbb{Z}^{n-1} \cap B_{n-1}(0,R) \right), \]
and 
\[ \Lambda = \left(\mathcal{N}_{\varepsilon R^{-1} } \left(R^{\sigma-1} \mathbb{Z}^{n-1} \times R^{2\sigma-1} \mathbb{Z}\cap B_{n}(0,2)\right)\right). \]
In \cite{BBCRV}, it was shown that 
\[ \left\lvert e^{i \frac{t}{R} \Delta} \widehat{\chi_{\Omega}} \right\rvert \gtrsim  \mathcal{H}^{n-1}(\Omega), \qquad \forall \, (x',t) \in \Lambda, \]
which follows by checking that the phase of the exponential is within a $\sim \varepsilon$ neighbourhood of an integer multiple of $2\pi i$ whenever $(x',t) \in \Lambda$. Let $\widehat{f_1}(\xi) = R^{-1/2}\varepsilon^{-1}\varphi(R^{-1/2} \varepsilon^{-1} \xi)$ with $\varphi$ a smooth bump function $\sim 1$ on $[-1,1]$ and vanishing outside $\left[-2 ,2 \right]$, with $\int \varphi = 1$. Take $\widehat{f}(\xi_1, \xi') = \widehat{f_1}(\xi_1) \chi_{\Omega}(\xi')$. Then 
\begin{multline*} \left\lvert e^{i\frac{t}{R} \Delta} f(x_1, x', t) \right\rvert = \left\lvert e^{i\frac{t}{R}\Delta_1}f_1(x_1,t) \right\rvert \left\lvert e^{i\frac{t}{R} \Delta_{n-1}}\widecheck{\chi_{\Omega}}(x',t) \right\rvert \\
\gtrsim  \mathcal{H}^{n-1}(\Omega), \quad |x_1| \leq  R^{-1/2}, \qquad (x',t) \in \Lambda. \end{multline*}
Let $\nu$ be the probability measure 
\[ \nu = cR \chi_{[- R^{-1/2},R^{1/2} ] } \times \chi_{\Lambda}, \]
with $c$ a constant $\sim 1$. Taking $M \sim \mathcal{H}^{n-1}(\Omega)$ in \eqref{weakestimate} and $\lambda \sim R$, using that $\supp \widehat{f} \subseteq B_n(0,10R)$, gives
\begin{equation} \label{weakcorollary}  \mathcal{H}^{n-1}(\Omega) \lesssim I_{n}(\nu)^{1/2} R^s \|f\|_2. \end{equation}
It was shown in \cite{du} that $c_n(\nu) \lesssim 1$, as can be checked by partitioning the range of $r$ with the points $\{0, R^{-1}, R^{-1+\sigma}, R^{-1+2\sigma}, R^{-1/2},1\}$ and checking the 5 ranges separately. One finds that $\mu(B(x,r)) \sim r^n$ for all $x \in \supp \mu$ for all $R^{-1/2} \leq r \leq 1$ or $r = R^{-1}$, with $\mu(B(x,r)) \ll r^n$ in the other ranges. It follows that $I_n(\mu) \sim \log R$. The definition of $f$ yields that $\|f\|_2 \sim R^{-1/4} \mathcal{H}_{n-1}(\Omega)^{1/2}$. Combining with \eqref{weakcorollary} gives 
\[ \mathcal{H}^{n-1}(\Omega) \lesssim (\log R)^{1/2} R^s R^{-1/4} \mathcal{H}^{n-1}(\Omega)^{1/2}, \]
or 
\[ \mathcal{H}^{n-1}(\Omega) \lesssim (\log R) R^{2s - \frac{1}{2}}. \]
Substituting $\mathcal{H}^{n-1}(\Omega) \sim R^{\sigma(n-1)}$ and $\sigma = \frac{1}{2(n+1)}$ into the above and letting $R \to \infty$ gives $\frac{n-1}{2(n+1)} \leq 2s-\frac{1}{2}$, which rearranges to $s \geq \frac{n}{2(n+1)}$. 
\end{proof} \end{sloppypar}


\section{Proof for the wave equation} \label{wavesection} Let 
\[ e^{it \sqrt{-\Delta}}g(x) = \frac{1}{(2\pi)^{n}} \int e^{i \left( \langle x, \xi \rangle + t|\xi| \right) } \widehat{g}(\xi). \]
In this section, for $\alpha >0$ let $s_n^{\max}(\alpha)$ be the infimum over all $s \geq 0$ with the property that for all $R \geq 1$,
\begin{equation} \label{smaxdefnwave} \int \sup_{0 < t < 1} \left\lvert e^{it \sqrt{-\Delta}}g(x) \right\rvert \, d\mu(x) 
\leq C_{\alpha,s} I_{\alpha}(\mu)^{1/2} R^{s}\|g\|_2,\end{equation}
for all Schwartz $g$ with $\supp \widehat{g} \subseteq B_n(0,R) \setminus B_n(0, R/2)$, for all $\mu \in \mathcal{M}(B_n(0,1))$. Similarly, let $s_n(\alpha)$ be the infimum over all $s \geq 0$ with the property that for all $R \geq 1$,
\begin{equation} \label{smaxdefnwave2} \int \sup_{0 < t < 1} \left\lvert e^{it \sqrt{-\Delta}}g(x) \right\rvert \, d\nu(x) 
\leq C_{\alpha,s} I_{\alpha}(\nu)^{1/2} R^{s}\|g\|_2,\end{equation}
for all Schwartz $g$ with $\supp \widehat{g} \subseteq B_n(0,R) \setminus B_n(0, R/2)$, for all $\nu \in \mathcal{M}(B_{n+1}(0,1))$. By duality, $\beta(\alpha, \Gamma^n) = n - 2s_n(\alpha)$ for $0 < \alpha \leq n+1$, where $\beta(\alpha, \Gamma^n)$ is the supremum over all $\beta>0$ with the property that for all $R \geq 1$ and $\nu \in \mathcal{M}(B_{n+1}(0,1))$, 
\[ \int_{B_n(0,1) \setminus B_n(0,1/2)} \left\lvert \widehat{\nu}(R(\xi, |\xi| )) \right\rvert^2 \, d\xi \leq C_{\beta} I_{\alpha}(\nu) R^{-\beta}.  \]
The quantity $s_n^{\max}(\alpha)$ was considered in \cite{hamkolee} where it was shown that $s_n^{\max}(\alpha) \leq s_n(\alpha)$ for all $0 < \alpha \leq n$. The following is the wave equation analogue of Theorem~\ref{equivalence}. 
\begin{theorem} \label{equivalence2} Let $n \geq 1$.  Let $\alpha>0$ and let $s \geq  0$. There is a constant $K$  depending only on $s$ and $n$ such that following holds. 
\begin{enumerate}[(i)] 
\item \label{sufficientcondition2}   If 
\begin{multline} \label{maxassumedwave} \int \sup_{0 < t < 1} \left\lvert e^{i t \sqrt{-\Delta}}g \right\rvert \, d\mu(x) \leq C_{\alpha,s}I_{\alpha}(\mu)^{1/2} \left\lVert g \right\rVert_{H^s(\mathbb{R}^n)} \\
 \forall \mu \in \mathcal{M}\left(B_n(0,1)\right) \quad \forall g \in \mathcal{S}(\mathbb{R}^n), \end{multline}
then
\begin{multline} \label{goalwave} \int \left\lvert e^{i t \sqrt{-\Delta}}f \right\rvert \, d\nu(x,t) \leq K C_{\alpha,s}I_{\alpha}(\nu)^{1/2} \left\lVert f \right\rVert_{H^s(\mathbb{R}^n)}\\
 \forall \nu \in \mathcal{M}\left(B_{n+1}(0,1)\right) \quad \forall f \in \mathcal{S}(\mathbb{R})^n. \end{multline}

\item \label{localversionwave} (Local version)  If $\lambda\geq 1$ and
\begin{multline} \label{maxassumedwave2} \int \sup_{0 < t < 1} \left\lvert e^{i t \sqrt{-\Delta}}g \right\rvert \, d\mu(x) \leq C_{\alpha,s}I_{\alpha}(\mu)^{1/2} \lambda^s \left\lVert g \right\rVert_{2}\\
\forall \mu \in \mathcal{M}\left(B_n(0,1)\right) \quad \forall g \in \mathcal{S}(\mathbb{R}^n) \text{ with } \supp \widehat{g} \subseteq B_n(0, \lambda), \end{multline}
 then 
\begin{multline} \label{goalwave2} \int \left\lvert e^{i t \sqrt{-\Delta}}f \right\rvert \, d\nu(x,t) \leq K C_{\alpha,s} I_{\alpha}(\nu)^{1/2} \lambda^s \left\lVert f \right\rVert_2\\
\\
 \forall \nu \in \mathcal{M}\left(B_{n+1}(0,1)\right) \quad \forall f \in \mathcal{S}(\mathbb{R})^n \text{ with }  \supp \widehat{f} \subseteq B_n(0, \lambda).\end{multline} 
\item \label{nontrivialwave1} For all $\alpha \in (0,n]$, $s_n(\alpha) = s_{n}^{\max}(\alpha)$. If $n <\alpha \leq n+1$ then $s_n^{\max}(\alpha) \geq s_n(\alpha)$.  
\end{enumerate}
 \end{theorem}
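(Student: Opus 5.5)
We run the argument from the proof of Theorem~\ref{equivalence}\eqref{localversion}, after reducing the cone phase to a quadratic one as the introduction suggests. Part~\eqref{nontrivialwave1} then follows from \eqref{localversionwave} directly from the definitions, and \eqref{sufficientcondition2} is handled like the case $\lambda=1$ of \eqref{localversionwave}. So the work is in \eqref{localversionwave}. By a smooth partition of unity in $\xi$ (finitely many pieces, costing only a constant in $K$) and rotation invariance of $\nu\mapsto I_\alpha(\nu)$ and of $e^{it\sqrt{-\Delta}}$, I will assume $\supp\widehat f$ lies in a narrow conical sector around $e_n$, at frequencies $|\xi|\sim\lambda$. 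Low frequencies $|\xi|\lesssim 1$ are trivial.

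The first step is a linear change of variables in frequency making the phase quadratic in all but one variable. Write $\xi=(\xi',\xi_n)$ and set $a=(\xi_n+|\xi|)/\sqrt2$, $b=(|\xi|-\xi_n)/\sqrt2$. These are the light-cone coordinates obtained by rotating the $(\xi_n,\tau)$ plane by $45^\circ$. Since $|\xi'|^2=|\xi|^2-\xi_n^2=2ab$, the cone $\tau=|\xi|$ becomes the family of paraboloids $b=|\xi'|^2/(2a)$, with $a\sim\lambda$ on the support. Correspondingly rotate space-time, $(x_n,t)\mapsto(y,\tau)$, with $y=(x_n+t)/\sqrt2$ and $\tau=(t-x_n)/\sqrt2$, so that $x_n\xi_n+t|\xi|=ya+\tau b$. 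Then
\[
e^{it\sqrt{-\Delta}}f(x)=c\int e^{i(\langle x',\xi'\rangle + ya + \tau|\xi'|^2/(2a))}\,\widehat f(\xi)\,J\,d\xi'\,da .
\]
With $a$ frozen, this is a Schrödinger evolution in $x'\in\mathbb R^{n-1}$ at time $\tau/(2a)$.

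The second step translates in $\xi'$ only: $\xi'=\eta-\theta$ with $\theta\in B_{n-1}(0,\lambda)$, keeping $a$ unchanged. The quadratic term gives
\[
|\eta-\theta|^2/(2a)=|\eta|^2/(2a)-\langle\eta,\theta\rangle/a+|\theta|^2/(2a).
\]
The middle term depends on $a$, so the translation cannot be absorbed by a shift of $x'$ that is uniform in $a$. This is where I expect the main obstacle. The first fix I would try is to translate in the original frequency variables instead, using the Lorentz-type symmetry of the cone. A Lorentz boost preserves $\tau=|\xi|$ and is linear in $(\xi,\tau)$, hence corresponds to a linear map $T_\theta$ on $(x,t)$. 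In light-cone coordinates a null rotation is exactly $\xi'\mapsto\xi'+a\,\omega$, $b\mapsto b+\langle\xi',\omega\rangle+a|\omega|^2/2$, with $a$ fixed. This is the Galilean-type shear, which makes the phase transform linearly. So I will parametrize by $\omega\in B_{n-1}(0,1)$, replacing $\theta$ by $a\omega$. The phase identity becomes
\[
\langle x',\xi'\rangle+ya+\tau b = \langle x'',\eta'\rangle + y''a + \tau b'' ,
\]
with $(x'',y'',\tau)$ a linear image of $(x',y,\tau)$ depending on $\omega$. The modified datum $f_\omega$ is the original one pulled back by the null rotation. Its $L^2$ norm is comparable up to a Jacobian $\sim 1$, and it keeps frequency support in $B(0,C\lambda)$ because $a\sim\lambda$ and $|\omega|\le1$.

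Third, as in the proof of Theorem~\ref{equivalence}, bound
\[
\int|e^{it\sqrt{-\Delta}}f|\,d\nu\le\int\sup_{0<t<1}|e^{it\sqrt{-\Delta}}f_\omega|\,d(\Pi_\omega)_\sharp\nu .
\]
Here $\Pi_\omega:\mathbb R^{n+1}\to\mathbb R^n$ is linear of rank $n$ (the time coordinate of the boosted frame is absorbed by the sup, after restricting $\nu$ to $t\in[0,1]$ and rescaling if needed). Apply \eqref{maxassumedwave2} to get the factor $I_\alpha((\Pi_\omega)_\sharp\nu)^{1/2}$. One family of boosts only gives an $(n-1)$-parameter average, one fewer than needed. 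So I would also average over rotations of the initial sector, or equivalently over boosts in all directions, obtaining an $n$-parameter family of kernel directions $G(\omega)\in S^n$. This is the "extra averaging parameter" of the paper. Checking that this family sweeps out an open subset of $S^n$ with nonvanishing Jacobian, so that its pushforward measure is $\lesssim\mathcal H^n$, is the delicate point; I would verify it by computing the kernel of $\Pi_\omega$ explicitly.

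Finally, write $\Pi_\omega=L_\omega Q_\omega$ via QR decomposition, and discard $L_\omega$ at bounded cost. Integrate in the parameters with Cauchy–Schwarz, and apply Solmon's identity \eqref{solmonidentity} with $k=1$ exactly as before. This bounds the average of $I_\alpha(Q_{\omega\sharp}\nu)$ by $c_nI_\alpha(\nu)$, giving \eqref{goalwave2}.
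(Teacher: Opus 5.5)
Your outline is the paper's proof: light-cone coordinates turning the cone into $b=|\xi'|^2/(2a)$, the shear $\xi'\mapsto\xi'+a\omega$ (a null rotation $N_\omega$, the paper's $\eta'\mapsto\eta'-\theta\eta_1$) in place of the Galilean translation, the maximal hypothesis applied to $\Pi_\sharp\nu$, an $n$-parameter average of kernel directions, QR, and Solmon's identity. The only divergence is the $n$-th parameter, which is exactly the step you left unverified. The paper uses the axial boost $(a,\xi',b)\mapsto(\kappa a,\xi',b/\kappa)$ with $\kappa\in(2,3)$ (its second use of the letter $\lambda$). This is the remaining obvious linear symmetry of $2ab=|\xi'|^2$. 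The paper then checks, from the explicit kernel vector $G(\theta,\lambda)$, that $(\theta,\lambda)\mapsto G$ is a local diffeomorphism into $S^n$.

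Your two candidates for the extra parameter are not equivalent. For a Lorentz map $\Lambda$ on frequency space, $\ker(\pi\circ\Lambda^{T})$ is spanned by $\Lambda^{-T}e_{n+1}=-\eta\Lambda e_{n+1}$, where $\eta=\mathrm{diag}(1,\dots,1,-1)$. So the kernel depends only on $\Lambda e_{n+1}$, and spatial rotations $R$ fix $e_{n+1}$. Consequently $N_\omega R$ has the same kernel $k(\omega)$ as $N_\omega$ alone. The composition $RN_\omega$ gives the rotated family $R\,k(\omega)$, but for rotations $R_\phi$ in the $(e_1,e_n)$-plane the Jacobian of $(\omega,\phi)\mapsto R_\phi k(\omega)$ vanishes exactly on $\{\omega_1=0\}$, in particular at $\omega=0$. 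You would therefore have to cut the parameter domain away from that set to get a pushforward $\lesssim\mathcal{H}^n|_{S^n}$.

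Pure boosts $B_v$, $|v|<c$, have no such problem: the kernel line is spanned by $B_{-v}e_{n+1}\propto(-v,1)$, an obvious diffeomorphism onto a cap of $S^n$. In fact, since $d\xi/|\xi|$ is Lorentz invariant, every $\Lambda$ in a compact set satisfies $Ef(X)=Ef_\Lambda(\Lambda^{T}X)$. Here $f_\Lambda(\zeta)=f(\xi(\zeta))\,|\xi(\zeta)|/|\zeta|$, where $(\xi(\zeta),|\xi(\zeta)|)=\Lambda(\zeta,|\zeta|)$, and one has $|\zeta|\sim|\xi|$ and $\|f_\Lambda\|_2\sim\|f\|_2$. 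With pure boosts you therefore need neither the sector decomposition, the light-cone coordinates, nor the null rotations, and the Jacobian check you called delicate is immediate. This is a somewhat cleaner route than the paper's. Once you commit to either the axial boost or pure boosts, your argument is complete.
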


\begin{proof} The proof reduces to the idea in Theorem~\eqref{equivalence} by using that paraboloids are conic sections, so only the differences for the wave equation will be given. It will be shown that \eqref{maxassumedwave2} implies \eqref{goalwave2}, so assume \eqref{maxassumedwave2} holds for some $s$. 

Let $f$ be a function on the cone, and denote
\[ Ef(x,t) = \int_{\mathbb{R}^n} e^{ i \langle (\xi,|\xi|), (x,t) \rangle } f  (\xi, |\xi|) \, d\xi, \]
Suppose that $f$ is supported on $\xi_1 \geq 0$, where $\xi = (\xi_1, \dotsc, \xi_{n+1})$. Write $\xi_{n+1} = |\xi|$, and define $U$ and $\eta$ by $U\xi = \eta$, where
\[ \eta_1 = (\xi_1 + \xi_{n+1})/ \sqrt{2},  \qquad \eta_{n+1} = (-\xi_1 + \xi_{n+1})/ \sqrt{2},\]
and $(\eta_2, \dotsc, \eta_n) = (\xi_2, \dotsc, \xi_n)$. Then the cone $\xi_{n+1} = |(\xi_1, \dotsc, \xi_n)|$ becomes $\eta_{n+1} = \frac{|(\eta_2, \dotsc, \eta_{n})|^2}{2\eta_1}$ in the new coordinates, and the region $\xi_1 \geq 0$ becomes $\eta_1 \geq |\eta'|/ \sqrt{2}$. By a change of variables
\begin{equation} \label{cov1} Ef(x,t) = \int e^{ i \left\langle \left(\eta_1, \eta', \frac{|\eta'|^2}{2\eta_1}\right), \left(\frac{x_1+t}{\sqrt{2}},x', \frac{-x_1+t}{\sqrt{2}}\right) \right\rangle } g(\eta) \, d\eta_1 \, \dotsb \, d\eta_n, \end{equation}
where $g(\eta_1,\eta') = f(U^*(\eta_1, \eta', \eta_{n+1}))) J(\eta)$, where $1/ \sqrt{2} \leq J(\eta) \leq \sqrt{2}$, $U^*$ is the adjoint or transpose of $U$, and $\eta_{n+1} = \frac{|(\eta_2, \dotsc, \eta_{n})|^2}{2\eta_1}$. The function $J$ is the Jacobian defined by the change of variables
\[ \int_{\mathbb{R}^n} G(\eta) J(\eta) \, d\eta = \int_{\mathbb{R}^n} G\left( \frac{\xi_1+|\xi|}{\sqrt{2}}, \xi'\right) \, d\xi, \]
and one can compute 
\[ J(\eta) =  \frac{1}{\sqrt{2}} \left( 1+ \frac{|\eta'|^2}{ 2 \eta_1^2} \right) = \frac{ |\xi|\sqrt{2}}{ \xi_1 + |\xi| }. \]
Define, for the function $g$ on $\mathbb{R}^n$ above, for $(\eta_1, \eta') \in \mathbb{R} \times \mathbb{R}^{n-1}$,
\[ g(\eta) = \lambda g_{\lambda,\theta}(\lambda \eta_1, \eta'  - \theta \eta_1 ), \]
where 
\begin{equation} \label{domain} (\lambda, \theta) \in \left\{ (\lambda, \theta) \in \mathbb{R} \times \mathbb{R}^{n-1} : \lambda \geq 1 + \frac{|\theta|}{\sqrt{2}} \right\}, \end{equation}
and correspondingly 
\[ g_{\lambda, \theta}(\eta_1,\eta') =: f_{\lambda, \theta}(U^*(\eta_1, \eta', \eta_{n+1})) J(\eta). \]
Note that $\lambda \eta_1  \geq |\eta'-\theta\eta_1|/ \sqrt{2}$ by the triangle inequality and since $(\lambda, \theta)$ lie in the domain in \eqref{domain}.  Therefore, the Jacobian factor coming from $ g_{\lambda, \theta}$ in the above formula for $f_{\lambda, \theta}$ also has values in $\left[1/\sqrt{2}, \sqrt{2}\right]$, and thus $\|f_{\lambda, \theta}\|_{L^2(\Gamma^n)} \sim \|f\|_{L^2(\Gamma^n)}$.
By a change of variables applied to \eqref{cov1}, 
\[ Ef(x,t) =  \int e^{ i \left\langle \left(\eta_1/\lambda,\eta'+(\eta_1/\lambda)\theta, \frac{ |\eta'+\theta (\eta_1/\lambda) |^2}{2\eta_1/\lambda }\right), \left(\frac{x_1+t}{\sqrt{2}}, x', \frac{-x_1+t}{\sqrt{2}}\right) \right\rangle } g_{\lambda, \theta}(\eta) \, d\eta_1 \, d\eta'. \]
This becomes
\begin{multline*} Ef(x,t) = \\ 
\int e^{ i \left\langle \left(\eta_1,\eta', \frac{|\eta' |^2}{2\eta_1}\right), \left(\frac{x_1+t}{\lambda \sqrt{2}} + \frac{|\theta|^2(-x_1+t)}{2\lambda \sqrt{2}} + \frac{\langle \theta, x' \rangle}{\lambda}, x'+\frac{t-x_1}{\sqrt{2}}\theta, \frac{\lambda(-x_1+t)}{\sqrt{2}}\right) \right\rangle } g_{\lambda, \theta}(\eta) \, d\eta_1  \, d\eta' . \end{multline*}
Converting back to the old variable $\xi$ by undoing the change of variables above results in
\begin{multline*} Ef(x,t) = \\ Ef_{\lambda, \theta}\left(U^*\left(\frac{x_1+t}{\lambda \sqrt{2}} + \frac{|\theta|^2(-x_1+t)}{2\lambda \sqrt{2}} + \frac{\langle \theta, x' \rangle}{\lambda},  x'+\frac{t-x_1}{\sqrt{2}}\theta, \frac{\lambda(-x_1+t)}{\sqrt{2}}\right) \right), \end{multline*}
where $U^*(x,x',t) = \left( \frac{x_1-t}{\sqrt{2}}, x', \frac{x_1+t}{\sqrt{2}} \right)$. This can be written as 
\begin{equation} \label{Efthetaformula} Ef(x,t) = Ef_{\lambda, \theta}\left( T_{\lambda, \theta}(x,t) \right),  \end{equation}
where
\begin{multline*} T_{\lambda, \theta}(x,t) = \bigg( \frac{x_1}{2}\left( \lambda+ \frac{1}{\lambda} \right)  - \frac{t}{2} \left( \lambda - \frac{1}{\lambda} \right) + \frac{|\theta|^2 (t-x_1)}{4\lambda} + \frac{\langle \theta, x' \rangle}{\lambda\sqrt{2}}, \\
x' + \frac{t-x_1}{\sqrt{2}} \theta, \quad  \frac{t}{2}\left({\lambda}+\frac{1}{\lambda} \right) -\frac{x}{2} \left( \lambda - \frac{1}{\lambda} \right)+ \frac{|\theta|^2 (t-x_1)}{4\lambda}  + \frac{\langle \theta, x' \rangle}{\lambda\sqrt{2}} \bigg).   \end{multline*}
Let $\nu_{\lambda, \theta} = T_{\lambda, \theta, \sharp} \nu$, where $\nu$ is a finite Borel measure on $B_{n+1}(0,1)$. Let $R \geq 1$. Without loss of generality it may be assumed that $\nu$ is supported in $B_{n+1}(0,1/100)$. Let $f$ be a function on the cone supported in $B(0,R) \setminus B(0,R/2)$. By the above, 
\begin{align*}  \int \left\lvert Ef(x,t) \right\rvert \, d\nu(x,t) &= \int \left\lvert Ef_{\lambda, \theta}(T_{\lambda, \theta}(x,t)) \right\rvert \, d\nu\\
  &= \int \left\lvert Ef(x,t) \right\rvert \, dT_{\lambda, \theta \sharp}\nu(x,t) \\
	&\leq \int \sup_{t \in [-1,1]} \left\lvert Ef_{\lambda, \theta}(x,t) \right\rvert \, d\left(\pi_{\sharp} \nu_{\lambda, \theta} \right).\end{align*}
%
Let $\Pi_{\lambda, \theta} = \pi \circ T_{\lambda, \theta}$, so that $\pi_{\sharp} \nu_{\lambda, \theta}  = \Pi_{\lambda, \theta \sharp} \nu$. Rewrite the above as
\begin{equation} \label{projectsmax} \int \left\lvert Ef(x,t) \right\rvert \, d\nu(x,t) \leq  \int \sup_{t \in [-1,1]} \left\lvert Ef_{\lambda, \theta}(x,t) \right\rvert \, d\left(\Pi_{\lambda, \theta \sharp} \nu \right). \end{equation}
If $h$ is a Schwartz function with $\supp \widehat{h} \subseteq B_n(0,\lambda) \setminus B_n(0, \lambda/2)$, let $f$ be the lift of $\widehat{h}$ into the cone. Let $h_{\lambda, \theta}$ be such that $f_{\lambda, \theta}$ is the lift of $\widehat{h_{\lambda, \theta}}$. Then $\widehat{h_{\lambda, \theta}}$ has support in $B_n(0, C\lambda) \setminus B_n(0, \lambda/C)$ for a sufficiently large constant $C$, for $\lambda \geq 100$. By \eqref{projectsmax} and the assumed \eqref{maxassumedwave2}, 
\begin{equation} \label{pause2}\int \left\lvert e^{it \sqrt{-\Delta} } h(x) \right\rvert \, d\nu(x,t)  \leq C_{\alpha,s} \sqrt{I_{\alpha}\left( \Pi_{\lambda, \theta \sharp} \nu\right)}  \lambda^s \|h\|_2.\end{equation}
The function $\Pi_{\lambda, \theta}$ is a linear map which can be identified with the $n \times (n+1)$ matrix
 \begin{equation} \label{projformula} \Pi_{\lambda, \theta} = \pi \circ T_{\lambda, \theta} \\=\begin{pmatrix} \frac{1}{2}\left(\lambda + \frac{1}{\lambda}\right) - \frac{|\theta|^2}{4\lambda} & \frac{\theta}{\lambda \sqrt{2}} & - \frac{1}{2}\left( \lambda- \frac{1}{\lambda} \right) +  \frac{|\theta|^2}{4\lambda} \\
 - \frac{\theta}{\sqrt{2}} & I_{n-1} & \frac{\theta}{\sqrt{2}} \end{pmatrix}, \end{equation}
which has full rank $n$ for $\lambda \neq 0$. The kernel of $\Pi_{\lambda, \theta}$
 is the span of the unit vector
 \begin{equation} \label{kernelformula} G(\theta, \lambda) := \frac{1}{\sqrt{ \frac{1}{2} + \frac{ \left\lvert \theta\right\rvert^2}{2}  + 2\left( \frac{ \left\lvert \theta \right\rvert^2}{4} + \frac{\lambda^2}{2} \right)^2 }}\begin{pmatrix} \frac{1}{2} - \frac{\left\lvert \theta \right\rvert^2}{4} - \frac{\lambda^2}{2} \\ \frac{\theta}{\sqrt{2}}  \\ -\frac{1}{2} -\frac{ \left\lvert \theta\right\rvert^2 }{4} - \frac{\lambda^2}{2} \end{pmatrix}. \end{equation}
 The map $G : \mathbb{R}^n \setminus \{\lambda = 0\} \to S^{n}$ is a local diffeomorphism everywhere, and is bijective when restricted to $\lambda >0$. The bijective condition is straightforward to check. 
To see the diffeomorphism condition, the derivative of the function $F: \mathbb{R}^{n} \to \mathbb{R}^{n+1}$ given by 
 \[ F(\theta, \lambda) = \begin{pmatrix} \frac{1}{2} - \frac{\left\lvert \theta \right\rvert^2}{4} - \frac{\lambda^2}{2} \\ \frac{\theta}{\sqrt{2}}  \\ -\frac{1}{2} -\frac{ \left\lvert \theta\right\rvert^2 }{4} - \frac{\lambda^2}{2} \end{pmatrix}. \]
 is 
\[ DF(\lambda, \theta) = \begin{pmatrix} - \frac{\theta}{2} & - \lambda \\ \frac{1}{\sqrt{2}} I_{n-1} & 0 \\ - \frac{\theta}{2} & - \lambda \end{pmatrix}. \]
The map $G$ is a local diffeomorphism at $(\theta, \lambda)$ provided $DF(\theta, \lambda)$ has full rank $n$ and $F(\theta, \lambda)$ is not a linear combination of the columns of $DF(\theta, \lambda)$. The full rank condition holds since $\lambda \neq 0$, and $F$ cannot be a linear combination of the columns of $DF$ since the first and last rows of $DF$ are identical, but the first and last entries of $F$ are distinct. 
Thus $G$ is a local diffeomorphism.

By the QR decomposition of $\Pi_{\lambda, \theta}^*$, for $\lambda \neq 0$, 
\[ \Pi_{\lambda, \theta} = \pi \circ T_{\lambda, \theta} = L_{\lambda, \theta} Q_{\lambda, \theta}, \]
 where $L_{\lambda, \theta}$ is an $n \times n$ lower triangular matrix depending continuously on $(\lambda, \theta) \in \mathbb{R}^n \setminus \{\lambda  = 0\}$, with positive entries on the diagonal, and $Q_{\lambda, \theta}$ is an $n \times (n+1)$-matrix such that the rows of $Q_{\lambda, \theta}$ form an orthonormal basis for the orthogonal complement of $G(\theta, \lambda)$ in $\mathbb{R}^{n+1}$.

 
In particular, since $L_{\lambda, \theta}$ is continuous, for any bounded set
\begin{equation} \label{compactK} K \subseteq \{ (\lambda, \theta) \in \mathbb{R} \times \mathbb{R}^{n-1} : |\lambda| > 1 + |\theta|/\sqrt{2}\}, \end{equation}
one has \begin{equation} \label{localdiffeo1} I_{\alpha}\left(\Pi_{\lambda, \theta \sharp} \nu \right) \sim_K I_{\alpha}\left(Q_{\lambda, \theta \sharp} \nu \right)  = \int_{G(\theta, \lambda)^{\perp}} \left\lvert \xi \right\rvert^{\alpha-n} \left\lvert \widehat{\nu}(\xi) \right\rvert^2 \, d\mathcal{H}^n(\xi). \end{equation}
 where $P_{v^{\perp}}$ denotes orthogonal projection to the orthogonal complement of $v$. Take $K= (2,3) \times B_{n-1}(0,1)$. Since $G: \mathbb{R}^n \to S^n$ is a diffeomorphism when restricted to a compact set containing $K$,
\begin{equation} \label{localdiffeo2}  \int_K I_{\alpha}\left(\Pi_{\lambda, \theta \sharp} \nu \right) \, d\mathcal{H}^n(\lambda, \xi)  \lesssim \int_{S^n} \int_{v^{\perp}} \left\lvert \xi \right\rvert^{\alpha-n} \left\lvert \widehat{\nu}(\xi) \right\rvert^2 \, d\mathcal{H}^n(\xi) \, d\mathcal{H}^n(v).  \end{equation}
After integrating \eqref{pause2} over $K$ and applying Cauchy-Schwarz followed by \eqref{localdiffeo2}, the rest of the proof is almost identical similar to the Schrödinger case (from around \eqref{aftersimplify}), so it is omitted. 
\end{proof}

\begin{remark} The method for the Schrödinger equation would extend easily to the hyperbolic Schrödinger equation.  It would likely also extend to the Helmholtz equation and spherical averages, using rotations instead of translations, but I do not know if the pointwise convergence problem for the Helmholtz equation has been considered in the literature.  \end{remark}

\bibliographystyle{alpha}
\bibliography{main}

\end{document}